\newtheorem{thm}{Theorem}[section]
\newtheorem{cor}[thm]{Corollary}
\newtheorem{prop}[thm]{Proposition}
\theoremstyle{definition}
\newtheorem{rem}[thm]{Remark}
\title[]{The nonlocal Almgren problem}
\author{Emanuel Indrei}
\address{Department of Mathematics\\
Kennesaw State University\\
Marietta, GA 30060\\
USA.}
\begin{document}
\setcounter{page}{1}
\pagenumbering{arabic}
\maketitle
\begin{abstract}
In the nonlocal Almgren problem, the goal is to investigate the convexity of a minimizer under a mass constraint via a nonlocal free energy generated with  some nonlocal perimeter and convex potential. In the paper, the main result is a quantitative stability theorem for the nonlocal free energy assuming symmetry on the potential. In addition, several results that involve uniqueness, non-existence, and moduli estimates
from the theory for crystals are proven also  in the nonlocal context. 
\end{abstract}
\section{Introduction}
A fundamental theorem in real analysis is Taylor's theorem: assume \\
\noindent $f:\mathbb{R} \rightarrow \mathbb{R}$ is twice differentiable, 

$$
f(e)=f(e_m)+f'(e_m)(e-e_m)+\frac{f''(\alpha_{e,e_m})}{2}|e-e_m|^2,
$$
\\
$\alpha_{e,e_m} \in (e_m,e)$. 
If $f'(e_m)=0$, $f''(\alpha_{e,e_m})\ge a_*>0$, then

\begin{equation} \label{sqt}
f(e)-f(e_m) \ge\frac{a_*}{2}|e-e_m|^2.
\end{equation}

In particular, one may obtain sharp information  from this: assume $f$ is a quantity which classifies an optimizer $e_m$ in the sense that $f(e)=f(e_m)$ implies $e=e_m$. Now, let $e$ satisfy
$f(e)=f(e_m)+ \epsilon$, with $\epsilon>0$ a small number; thus $f(e) \approx f(e_m)$ and the expectation is that $e \approx e_m$. The utility of \eqref{sqt} is to make this clear
in the context that $e$ is at most, up to a constant, $ \sqrt{\epsilon}$ from $e_m$.

When instead of a real-valued function $f$ the object of investigation is an energy $\mathcal{E}$ which is defined on measurable sets $E$, recent work has investigated analogous estimates. An application is to understand the perturbations of minimizers through the energy.
The main problem is to minimize $\mathcal{E}$ subject to a mass constraint $|E|=m$. Thus at the minimum $E_m$, the first variation is zero $\mathcal{E}'(E_m)=0$, hence if some lower bound exists on the second variation, it is natural to anticipate that mod an invariance class

\begin{equation} \label{es}
\mathcal{E}(E) - \mathcal{E}(E_m)\ge a_*||\chi_E-\chi_{E_m}||^2,
\end{equation}
\\
where $\chi_E$ is the characteristic function of $E$. A natural norm  is often  chosen to be the $L^1$ norm \cite{zbMATH06728315}.
In applications, the free energy is of the form
$$
\mathcal{E} (E)=\mathcal{E}_{s}(E)+\mathcal{E}_{a}(E),
$$
where $\mathcal{E}_{s}$ is a surface energy and $\mathcal{E}_{a}$ a potential/repulsion energy. 
In the next discussion, four physical and fundamental energies are underscored.

\subsection{The Free Energy}
The crystal theory starts with the anisotropic surface energy on sets of finite perimeter $E \subset \mathbb{R}^n$ with reduced boundary $\partial^* E$:
$$
\mathcal{E}_{s}(E)=\mathcal{F}(E)=\int_{\partial^* E} f(\nu_E) d\mathcal{H}^{n-1},
$$
where $f$ is a surface tension, i.e. a convex positively 1-homogeneous 

$$f:\mathbb{R}^n\rightarrow [0,\infty)$$
\\
with $f(x)>0$ if $|x|>0$.
The potential energy of a set $E$ is
$$
\mathcal{E}_{a}(E)=\mathcal{G}(E)=\int_E g(x)dx,
$$
where $g \ge 0$, $g(0)=0$, $g \in L_{loc}^\infty$  \cite{MR4730410, qk, qkv, pFZ, MR493671, MR872883,MR1116536, MR1130601, MR2672283, MR4380032, MR2807136}; see in addition many interesting references in \cite{MR4730410} that comprehensively discuss the history. In thermodynamics, to obtain a crystal, one minimizes the free energy 
$$
\mathcal{E}(E)=\mathcal{F}(E)+\mathcal{G}(E)   
$$
under a mass constraint. Gibbs and Curie independently discovered this physical principle \cite{G, Crist}.\\

\subsection{The Binding Energy}
\noindent The nonlocal Coulomb repulsion energy is given via
$$
\mathcal{E}_{a}(E)=\mathcal{D}(E)=\alpha_1 \int \int_{E \times E} \frac{1}{|z-y|^\lambda}dz dy,
$$
$\lambda \in (0,n)$, $\alpha_1>0$.
The binding energy of a set of finite perimeter $E \subset \mathbb{R}^n$ is the sum
$$
\mathcal{E}(E)=\mathcal{F}(E)+\mathcal{D}(E).     
$$
In the classical context, $\lambda=1$, $f(x)=|x|$, $n=3$, $\alpha_1=\frac{1}{2}$ \cite{MR4314139}. The theory is historically attributed to
Gamow via  his 1930 paper \cite{row52} and it successfully predicts the non-existence of nuclei with a large atomic number, cf. references in \cite{qkvp, qkvk, MR3322379}. \\

\subsection{The  Nonlocal  Free Energy}
\noindent The nonlocal perimeter encodes a parameter $\alpha \in (0,1)$

$$
\mathcal{E}_{s}(E)=P_\alpha(E)=\int_{E}\int_{E^c}  \frac{1}{|x-y|^{\alpha+n }}dxdy
$$
\cite{MR2425175, MR3322379, MR2675483, MR4674821}. Caffarelli, Roquejoffre, and Savin investigated the Plateau problem with respect to the nonlocal energy functionals \cite{MR2675483}. The nonlocal isoperimetric inequality has appeared in \cite{MR2425175}: assume $|E|=|B_a|$, then 
$$
P_\alpha(E)\ge P_\alpha(B_a) 
$$
with equality if and only if $E=B_a+x$. Hence the nonlocal free energy is

$$
\mathcal{E} (E)=P_\alpha(E)+\mathcal{G}(E),
$$
\cite{MR3640534, MR4674821}.

\subsection{The  Nonlocal Binding Energy}
The nonlocal Coulomb repulsion energy together with the nonlocal perimeter in the aforementioned give the nonlocal binding energy
$$
\mathcal{E} (E)=P_\alpha(E)+\mathcal{D}(E),
$$
see \cite{MR3322379} for a theorem on the minimizers when the mass is small.

\subsection{The Main Problem}

\noindent Observe via the above that four main choices of $\mathcal{E}$ are:

$$
\mathcal{E} (E)=\mathcal{F}(E)+\mathcal{G}(E)
$$

$$
\mathcal{E} (E)=\mathcal{F}(E)+\mathcal{D}(E)
$$

$$
\mathcal{E} (E)=P_\alpha(E)+\mathcal{G}(E)
$$

$$
\mathcal{E} (E)=P_\alpha(E)+\mathcal{D}(E).
$$
\\

Therefore the central problem is: assume $m>0$ and solve
$$
\inf\{\mathcal{E}(E): |E|=m\}.
$$ 
Naturally, the questions involve existence, uniqueness, convexity, and optimal stability.
My paper investigates this for

$$
\mathcal{E} (E)=P_\alpha(E)+\mathcal{G}(E).
$$

Observe that two main ingredients define the nonlocal free energy of a set  $E \subset
\mathbb{R}^n$: $P_\alpha(E)$; and, $\mathcal{G}(E)=\int_E g(x)dx$.

In this context, the nonlocal Almgren problem is to investigate the convexity of a minimizer $E_m$ under the assumption that $g$ is convex, refer to \cite[p. 146]{MR2807136} to understand the local Almgren problem. 
Several theorems may be shown without convexity on $g$. In particular, a complete theory begins via $g \in L_{loc}^\infty$  (in several contexts, one can assume $g \in L_{loc}^1$).\\

\noindent 1. Assuming coercivity, there are minima for $m>0$ \cite{MR3640534}.\\
2. Assuming that $m$ is sufficiently small, all minimizers are convex \cite{MR4674821}.\\
3. One may construct a $g$ which is convex so that there are no minimizers if $m>0$,  see Theorem \ref{ghp}. \\
4. Assuming $g(x)=h(|x|)$, $h:
\mathbb{R}^+
\rightarrow \mathbb{R}^+$ is increasing, convex, $h(0)=0$, there exists a stability estimate similar to \eqref{es}, see Theorem \ref{@'}.\\
5. Assuming $g \in L_{loc}^\infty$ and up to sets of measure zero
$g$ admits unique
minimizers $E_m$, there exist energy moduli. In addition, assuming that $m$ is sufficiently small, the energy modulus has a product structure, see Proposition \ref{Ko} and Theorem \ref{@'z}.\\
 6. Upper bounds for the moduli are obtained with minimal assumptions, see Theorem \ref{@7z'}\\

In the paper, the novelty mostly is in 4. Interestingly, 3, 5, 6 can be obtained, via minor changes, as in \cite{MR4730410, qk} (in 6, one utilizes \cite{MR4674821}) and hence these proofs are in the appendix. 
 \section{Stability for nonlocal free energy minimization}
 
\begin{thm} \label{@'}
Suppose $g(x)=h(|x|)$, $h:
\mathbb{R}^+
\rightarrow \mathbb{R}^+$ is increasing, convex, $h(0)=0$. Let $m>0$, $|
B_a|=|E|=m$,
then\\ 
\noindent (i)
$$
\mathcal{E}(E) - \mathcal{E}(B_a)  \ge r(m,  n,\alpha) ||\chi_E-\chi_{B_a}||_{L^1}^4
$$
for some $r(m,n,\alpha)> 0$; \\

\noindent (ii) supposing $\hat{a}>a$, $E \subset B_{\hat{a}}$, then 

$$
\mathcal{E}(E) - \mathcal{E}(B_a)  \ge r(m, \hat{a}, n,\alpha) ||\chi_E-\chi_{B_a}||_{L^1}^2
$$
for some explicit $r(m, \hat{a}, n,\alpha)> 0$. 
\end{thm}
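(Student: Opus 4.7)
The plan is to split the deficit as $\mathcal{E}(E) - \mathcal{E}(B_a) = [P_\alpha(E) - P_\alpha(B_a)] + [\mathcal{G}(E) - \mathcal{G}(B_a)]$, with both summands nonnegative: the first by the nonlocal isoperimetric inequality cited in the introduction, the second by the bathtub principle since $g = h(|\cdot|)$ is radial and nondecreasing. For the perimeter, I would invoke the quantitative nonlocal isoperimetric inequality of Figalli, Fusco, Maggi, Millot, and Morini, producing a translation $x_m = x_m(E)$ with
$$P_\alpha(E) - P_\alpha(B_a) \;\ge\; c_1(m,n,\alpha)\,A^2,\qquad A := |E \triangle (B_a + x_m)|.$$

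For the potential summand, a Fubini/layer-cake computation together with $h(0)=0$ gives
$$\mathcal{G}(E) - \mathcal{G}(B_a) \;=\; \int_0^a h'(s)\,|B_s\setminus E|\,ds + \int_a^\infty h'(s)\,|E\setminus B_s|\,ds,$$
with both integrands nonnegative. Convexity of $h$ makes $h'$ nondecreasing, so $h'$ is bounded below by a positive constant on any interval $[a/2,\hat a]$; interchanging integration order then yields
$$\mathcal{G}(E) - \mathcal{G}(B_a) \;\ge\; c_2(h,a,\hat a)\int_{E \triangle B_a}\bigl||x| - a\bigr|\,dx \qquad (E \subset B_{\hat a}).$$
A reverse-Chebyshev/thin-shell optimization, minimizing $\int_{E\triangle B_a}||x|-a|\,dx$ subject to fixed $|E\triangle B_a|$ and extremized by a symmetric shell around $\partial B_a$, bounds the right-hand side below by $c_n|E\triangle B_a|^2/a^{n-1}$, and (ii) follows from the potential deficit alone. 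For (i) no confinement is available, so one passes through the optimal translate: by the triangle inequality $|E\triangle B_a|\le A + B$ with $B := |(B_a+x_m)\triangle B_a|$, and by convexity of $h$ combined with the elementary $|x+x_m|+|x-x_m|\ge 2|x|$, one obtains a modulus $\omega$ with $\mathcal{G}(B_a+x_m) - \mathcal{G}(B_a) \ge \omega(|x_m|)$, which is quadratic in $|x_m|$ on bounded ranges via $h(r)/r$ being nondecreasing. A case split on $|x_m|$ combined with the trivial $|E\triangle B_a|\le 2m$ then delivers
$$|E\triangle B_a|^4 \;\le\; (2m)^2(A+B)^2 \;\le\; C(m,n,\alpha)\bigl(\mathcal{E}(E) - \mathcal{E}(B_a)\bigr).$$

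The main obstacle is the quantitative lower bound on $\mathcal{G}(B_a+x_m) - \mathcal{G}(B_a)$ in terms of $|x_m|$ under only the minimal hypotheses that $h$ is convex, nondecreasing, and vanishes at zero: this family admits $h$ with arbitrarily slow growth near zero, so the resulting modulus $\omega$ is not uniformly quadratic and the relevant constant depends on $h$ through its behavior on $[0,a]$. The key leverage is that $h(r)/r$ is nondecreasing, a direct consequence of convexity and $h(0)=0$; combined with a case split between small and large $|x_m|$ (the latter handled by the $\mathcal{G}$ tail being large when $E$ is far from the origin), this is what enables the quartic bound in (i) while recovering the quadratic bound in (ii) once confinement $E \subset B_{\hat a}$ uniformizes the constants.
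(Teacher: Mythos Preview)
Your treatment of (ii) is sound and matches the paper's in spirit: both reduce to a purely potential-based quadratic estimate (the paper cites the inequality $|E_*\triangle B_{a_1}|\le r^*[\mathcal G(E_*)-\mathcal G(B_{a_1})]^{1/2}$ from \cite{qk}, which is exactly your layer-cake/shell bound).

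For (i) there is a genuine gap. The lower bound you establish is $\mathcal G(B_a+x_m)-\mathcal G(B_a)\ge\omega(|x_m|)$, but the energy deficit contains $\mathcal G(E)-\mathcal G(B_a)$, not $\mathcal G(B_a+x_m)-\mathcal G(B_a)$; you never explain how to pass from one to the other. The natural bridge is the one-sided comparison
\[
\mathcal G(E)-\mathcal G(B_a+x_m)\ \ge\ -\int_{(B_a+x_m)\setminus E}g\ \ge\ -h(a+|x_m|)\,A,
\]
which on bounded $|x_m|$ gives $\omega(|x_m|)\le (\mathcal E(E)-\mathcal E(B_a))+C\,A$. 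Since $A\lesssim\sqrt{\text{deficit}}$, this yields only $B^2\lesssim\sqrt{\text{deficit}}$, \emph{not} $B^2\lesssim\text{deficit}$. Consequently your displayed chain $(2m)^2(A+B)^2\le C(\mathcal E(E)-\mathcal E(B_a))$ is false as written. The conclusion can still be rescued, but directly: $|E\triangle B_a|\le A+B\lesssim \text{deficit}^{1/2}+\text{deficit}^{1/4}\lesssim\text{deficit}^{1/4}$, without the $(2m)^2$ detour.

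The paper proceeds differently. Rather than comparing $\mathcal G(E)$ to $\mathcal G(B_a+x_m)$, it first runs a soft compactness argument (Brenier transport, Steiner symmetrization, Fatou) to show qualitatively that the optimal translate $z_E\to 0$ as the deficit vanishes. Once $|z_E|$ is small it \emph{truncates} $E$ to $E_*=E\cap B_{a+|z_E|}$ and applies the confined estimate of part (ii) to $E_*$ versus $B_{a_1}$ with $|B_{a_1}|=|E_*|$; the resulting $|E_*\triangle B_{a_1}|\lesssim[\mathcal G(E_*)-\mathcal G(B_{a_1})]^{1/2}\lesssim[\text{deficit}+\sqrt{\text{deficit}}]^{1/2}$ is then chained through the triangle inequality to control $|(B_a+z_E)\triangle B_a|$ and finally $|E\triangle B_a|$. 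Your route via $\mathcal G(B_a+x_m)$ is conceptually cleaner if the bridging step is supplied, but the paper's truncation avoids that comparison entirely and keeps every estimate inside a fixed ball where the confined bound applies verbatim.
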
 
\begin{rem}
The exponent 4 is appearing when considering the stability of the isoperimetric inequality. Hall proved a stability theorem with 4 and conjectured that the exponent can be replaced with 2 \cite{zbMATH00055774}. The conjecture was proven in \cite{MR2456887, MR2672283}.
\end{rem}
\begin{proof}

\noindent  (i)\\
Assume 
$$
\mathcal{E}(E) - \mathcal{E}(B_a) \ge \nu,
$$
for $\nu>0$. Then since

$$
|E \Delta B_a|^4 \le 16m^4,
$$
\begin{align*} 
|E \Delta B_a|^4 &\le 16m^4\\
&\le \frac{16m^4}{\nu} \nu \le \frac{16m^4}{\nu}(\mathcal{E}(E) - \mathcal{E}(B_a)).
\end{align*}
Therefore it is sufficient to prove: there exists $\nu>0$ so that if 
$$
\mathcal{E}(E) - \mathcal{E}(B_a) \le \nu,
$$
then 
\begin{equation} \label{w}
w(|E \Delta B_a|) \le \mathcal{E}(E) - \mathcal{E}(B_a),
\end{equation}
where $w(t)=r(m,n,\alpha)t^4$. To start, the existence of a modulus $w$ is proved so that \eqref{w} is true (observe this also follows from a compactness proof, cf. Proposition \ref{Ko}, as soon as one obtains that the ball is the unique minimizer; the new argument has advantages in the context of explicitly encoding estimates to identify $w(t)=r(m,n,\alpha)t^4$).
Suppose
$$T: E\setminus B_a \rightarrow B_a \setminus E$$
denotes the Brenier map between $\mu_* = \chi_{E\setminus B_a} dx$ and
$
\nu_*=\chi_{B_a \setminus E} dx$ \cite{pFZ4}. Since
$$
T_{\#} \mu_*=\nu_*,
$$
one has
\begin{equation} \label{t}
\int_{E \setminus B_a} h(|T(x)|)dx=\int_{B_a \setminus E} h(|x|)dx;
\end{equation}
thus, \eqref{t} and monotonicity of $h$ yield

\begin{align}  \label{sh}
\int_{B_a} h dx&=\int_{E \cap B_a} h dx+\int_{B_a \setminus E} h(|x|)dx \notag\\
&=\int_{E \cap B_a} h dx +\int_{E \setminus B_a} h(|T(x)|)dx \notag\\
&\le \int_{E \cap B_a} h dx + \int_{E \setminus B_a} h(|x|)dx \notag\\
&=\int_E h dx.
\end{align}

Observe that via $|T(x)| \le a$ and the above,
$$
\int_{E \setminus B_a} [h(|x|)-
h(a)]dx \le \int_E h dx-\int_{B_a} h dx. 
$$
Hence the previous inequality and \cite{MR3322379} imply

\begin{align*}
\mathcal{E}(E) - \mathcal{E}(B_a)&= P_\alpha(E)-P_\alpha(B_a)+ \mathcal{G}(E)-\mathcal{G}(B_a)\\
& \ge P_\alpha(B_a)\frac{A^2(E)}{C(n,\alpha)}+\int_E h dx-\int_{B_a} h dx\\
& \ge P_\alpha(B_a)\frac{A^2(E)}{C(n,\alpha)}+ \int_{E \setminus B_a} [h(|x|)-h(a)]dx,
\end{align*}
$$
A(E)=\inf\{\frac{|E \Delta (B_a+x)|}{|E|}: x \in \mathbb{R}^n\}.
$$
Let $z_E$ achieve 
$$
A(E)=\frac{|E \Delta (B_a+z_E)|}{|E|}.
$$
Hence
\begin{equation} \label{qu}
\Big(\frac{|E \Delta (B_a+z_E)|}{|E|}\Big)^2 \le \tilde a_1\Big(\mathcal{E}(E) - \mathcal{E}(B_a)\Big),
\end{equation}
$\tilde a_1=\tilde a_1(n,\alpha,a)>0.$
Assume
$$
 \mathcal{E}(E_i) - \mathcal{E}(B_a) \rightarrow 0,
$$
$|E_i|=m=|B_a|$. Observe 
$$
|E_i \Delta (B_a+z_{E_i})| \rightarrow 0.
$$
Let $A(i)\subset B_a+z_{E_i}$, $|A(i)|\ge p>0$.
Thus 
$$
\int |\chi_{E_i}(x)-\chi_{B_a+z_{E_i}}(x)|dx \rightarrow 0
$$
yields
$$
\int_{A(i)} |\chi_{E_i}(x)-1|dx \rightarrow 0.
$$
By the triangle inequality,
\begin{align*}
||A(i)\cap E_i|-|A(i)||&=|\int_{A(i)} (\chi_{E_i}(x)-1)dx|\\
&\le \int_{A(i)} |\chi_{E_i}(x)-1|dx \rightarrow 0.
\end{align*}
This thus implies
$$
|A(i)\cap E_i| \ge \frac{p}{7}
$$
assuming $i$ is large.\\

\noindent Claim 1: $\sup_{i} |z_{E_{i}}| <\infty$.\\

\noindent Proof of Claim 1:\\
Assume not. Then modulo a subsequence,
$$
|z_{E_{i}}| \rightarrow \infty.
$$
Since $A(i)\subset B_a+z_{E_i}$, one obtains
$$
\inf_{A(i)\cap E_i} g \rightarrow \infty
$$
thanks to the (strict) monotonicity of $g$. Thus
\begin{align*}
\infty &=\lim_i \frac{p}{7} \inf_{A(i)\cap E_i} g \\
&\le  \int_{A(i)\cap E_i} g \\
 &\le \int_{E_i} g \rightarrow \int_B g <\infty,
\end{align*}
and this contradiction yields Claim 1.\\

\noindent Claim 2: $ |z_{E_i}| \rightarrow 0$.\\

\noindent Proof of Claim 2:\\
If one can find a subsequence (continued to have the same index) so that 
$\inf_{i} |z_{E_{i}}| > 0$,
observe via the positive bound that up to possibly another subsequence,

$$
z_{E_{i}} \rightarrow z \neq 0.
$$
In particular, 
\begin{align*}
|E_i \Delta (B_a+z)|&\le |(B_a+z_{E_i})\Delta (B_a+z)|+|E_i \Delta (B_a+z_{E_i})| \\
&\le a_*|z_{E_i}-z|+|B_a|\sqrt{\tilde a_1(\mathcal{E}(E_i) - \mathcal{E}(B_a))}\\
&\rightarrow 0.
\end{align*}
Hence
$$
\chi_{E_{i}} \rightarrow \chi_{B_a+z} \hskip .4in \text{in $L^1$}.
$$
and this readily yields, mod a subsequence,
$$
\chi_{E_{i}} \rightarrow \chi_{B_a+z} \hskip .4in \text{a.e.}.
$$
Therefore via Fatou
\begin{align*}
\int_{B_a+z} g \le \liminf_i \int_{\mathbb{R}^n} g \chi_{E_{i}}=\mathcal{G}(B_a),
\end{align*}
Define $H=B_a+z$.
Consider $\{v_1, v_2, \ldots \}$ directions which generate via
Steiner
symmetrization with respect to the planes through the origin and
with normal $v_i$,
$H_{v_1}, H_{v_2}, \ldots$ and
$$
H_{v_i} \rightarrow B_a.
$$
Set
$$
H_1=\{(x_2, \ldots, x_n): (x_1, x_2, \ldots, x_n) \in H\}
$$
$$H^{x_2, x_3, \ldots, x_n}=\{x_1: (x_1, x_2, \ldots, x_n) \in H\}.$$
Note that one may let $\frac{x_1}{|x_1|}=v_1$; hence the monotonicity and complete symmetry of $g$  imply

\begin{align*}
\int_{H_{v_1}} g dx&=\int_{H_1} \Big( \int_{-\frac{|H^{x_2, x_3, \ldots, x_n}|}{2}}
^{\frac{|H^{x_2, x_3,
\ldots, x_n}|}{2}} g dx_1\Big) dx_2\ldots dx_n \\
&\le \int_{H_1} \Big( \int_{H^{x_2, x_3, \ldots, x_n}} g
dx_1\Big) dx_2\ldots
dx_n \\
&=\int_H g dx.
\end{align*}
Moreover, via the radial property of $g$, one may rotate the coordinate $x_1$
so that $
\frac{x_1}{|x_1|}=v_2$ and iterate the argument above:
$$
\int_{H_{v_1}} g dx \ge \int_{H_{v_2}} g dx;
$$
in particular, note that via $H\neq B_a$ \& the monotonicity of $g$ (the strict monotonicity), there is one direction so that the inequality is strict, therefore
$$
\int_{H} g dx > \int_{B_a} g dx.
$$
Hence one obtains a contradiction
\begin{align*}
 \int_{B_a} g dx&<\int_{H} g=\int_{B_a+z} g\\
 & \le \liminf_i \int_{\mathbb{R}^n} g \chi_{E_{i_l}}\\
 &=\mathcal{G}(B_a)= \int_{B_a} g dx.
\end{align*}

Note that this proves:\\

assuming
$$
 \mathcal{E}(E_i) - \mathcal{E}(B_a) \rightarrow 0,
$$
it follows that

$$
|E_i \Delta (B_a+z_{E_i})| \rightarrow 0,
$$

\begin{equation} \label{x}
z_{E_i} \rightarrow 0.
\end{equation}
Now note
\begin{align*}
|E_i \Delta B_a| &\le |E_i \Delta (B_a+z_{E_i})|+|B_a \Delta (B_a+z_{E_i})| \\
&\le  |E_i \Delta (B_a+z_{E_i})|+a_*|z_{E_i}|\rightarrow 0.
\end{align*}
Hence there is some modulus $w$ so that
$$
\mathcal{E}(E) - \mathcal{E}(B_a)  \ge w(|E\Delta B_a|).
$$

Supposing 
$\hat{a_1}>a_1>0$, $E_* \subset B_{\hat{a_1}}$, $|E_*|=|B_{a_1}|$, then via strict monotonicity and convexity of $h$, one obtains that the subdifferential
$$
\partial_+h(a_1) \neq \emptyset
$$
is compact and 
\begin{equation} \label{gr}
\inf_{\partial_+h(a_1)} |x|>0:
\end{equation}
assume $0\in \partial_+h(a_1)$, one then can use $a_1>0$ and convexity to deduce that $g$ has a global minimum at $a_1$ and this is a contradiction via the strict monotonicity ($g(0)=0$, $g \ge 0$), therefore this shows \eqref{gr}. 
Hence thanks to \cite{qk},
there exists 
\begin{equation} \label{ge}
r_{\hat{a_1},a_1, \partial_+h(a_1)}^*=\Big(\frac{1}{ \inf_{\partial_+h(a_1)} |x|A_*}\Big)^{\frac{1}{2}}>0,
\end{equation}
$A_*=A_*(\hat{a_1}, n,a)>0$, so that 
\begin{equation} \label{st}
r_{\hat{a_1},a_1, \partial_+h(a_1)}^*\Big[\mathcal{G}(E_*)-\mathcal{G}(B_{a_1})\Big]^{\frac{1}{2}} \ge |E_*\Delta B_{a_1}|.
\end{equation}

Supposing $|E|=|B_a|$,
via the previous argument (cf. \eqref{x}) if 
$$
 \mathcal{E}(E) - \mathcal{E}(B_a) \rightarrow 0,
$$
it follows that

$$
z_{E} \rightarrow 0.
$$
Hence if 

$$
\mathcal{E}(E) - \mathcal{E}(B_a)\le \nu
$$
where
$\nu$ small,
then $|z_{E} |$ is small.

Next, 
$$
(B_a+z_E) \subset B_{a+|z_E|}
$$
yields
$$
E\setminus B_{a+|z_E|} \subset E\setminus(B_a+z_E)
$$
and thus utilizing \eqref{qu}
\begin{equation} \label{sq}
|E\setminus B_{a+|z_E|}| \le | E\setminus(B_a+z_E)| \le \sqrt{|B_a|^2\tilde a_1(\mathcal{E}(E) - \mathcal{E}(B_a))}.
\end{equation}
Set 
$$
E_*=E \cap (B_{a+|z_E|});
$$
then note
$$
E=E_* \cup (E \setminus (B_{a+|z_E|}))
$$
\& thanks to \eqref{sq}
\begin{align*}
m&=|E_*|+|E \setminus (B_{a+|z_E|})|\\
&\le |E_*|+\sqrt{|B_a|^2\tilde a_1(\mathcal{E}(E) - \mathcal{E}(B_a))}.
\end{align*}
Therefore
\begin{equation} \label{th}
m-|E_*|\le \sqrt{|B_a|^2\tilde a_1(\mathcal{E}(E) - \mathcal{E}(B_a))};
\end{equation}
next, consider $a_1>0$ via
$$
|E_*|=|B_{a_1}|=a_1^n|B_1|.
$$
Observe that \eqref{th} easily implies
$$
m-\sqrt{|B_a|^2\tilde a_1(\mathcal{E}(E) - \mathcal{E}(B_a))}\le |E_*|=a_1^n|B_1|,
$$
hence
$$
\Big(\frac{m-\sqrt{|B_a|^2\tilde a_1(\mathcal{E}(E) - \mathcal{E}(B_a))}}{|B_1|}\Big)^{\frac{1}{n}} \le a_1.
$$
In particular, since the energy difference is small $\mathcal{E}(E) - \mathcal{E}(B_a)\le \nu$, there exists a lower bound on $a_1$ via $a, n, \alpha$.
Thanks to $|z_{E} |$ being small and \eqref{st}, one may choose
$$
5a>\hat{a_1}>a+|z_E|
$$
such that 
\begin{equation} \label{bdd}
r_{\hat{a_1},a_1, \partial_+h(a_1)}^*\Big[\mathcal{G}(E_*)-\mathcal{G}(B_{a_1})\Big]^{\frac{1}{2}} \ge |E_*\Delta B_{a_1}|,
\end{equation}
where $r_{\hat{a_1},a_1, \partial_+h(a_1)}^*$ is bounded via $a, n, \alpha$.
Now since $a\ge a_1$, \eqref{th} implies
\begin{align*}
\mathcal{G}(E_*)-\mathcal{G}(B_{a_1})& =\mathcal{G}(E_*)-\mathcal{G}(B_{a})+\mathcal{G}(B_{a})-\mathcal{G}(B_{a_1})\\
&\le \mathcal{G}(E_*)-\mathcal{G}(B_{a})+\big(\sup_{B_a} g \big)|B_a\setminus B_{a_1}| \\
& \le \mathcal{G}(E_*)-\mathcal{G}(B_{a})+\big(\sup_{B_a} g \big)(|B_a|- |B_{a_1}|)\\
&=\mathcal{G}(E_*)-\mathcal{G}(B_{a})+\big(\sup_{B_a} g\big) (m- |E_*|)\\
&\le \mathcal{E}(E) - \mathcal{E}(B_a)+\big(\sup_{B_a} g\big) \sqrt{|B_a|^2\tilde a_1(\mathcal{E}(E) - \mathcal{E}(B_a))}.
\end{align*}
 In particular, \eqref{bdd} and the above inequality imply
\begin{align} \label{e}
|E_*\Delta B_{a_1}|& \le r_{\hat{a_1},a_1, \partial_+h(a_1)}^*\Big[\mathcal{G}(E_*)-\mathcal{G}(B_{a_1})\Big]^{\frac{1}{2}} \notag\\
& \le r_{\hat{a_1},a_1, \partial_+h(a_1)}^*\Big[ \mathcal{E}(E) - \mathcal{E}(B_a)+\sup_{B_a} g \sqrt{|B_a|^2\tilde a_1(\mathcal{E}(E) - \mathcal{E}(B_a))}\Big]^{\frac{1}{2}}.
\end{align}
Also, \eqref{th} \& \eqref{sq} yield
\begin{equation} \label{ba}
|B_a\Delta B_{a_1}|= |B_a\setminus B_{a_1}| \le \sqrt{|B_a|^2\tilde a_1(\mathcal{E}(E) - \mathcal{E}(B_a))}
\end{equation}
\begin{equation} \label{bd}
|E_* \Delta E| =|E \setminus (B_{a+|z_E|})| \le \sqrt{|B_a|^2\tilde a_1(\mathcal{E}(E) - \mathcal{E}(B_a))}.
\end{equation}
Hence \eqref{ba}, \eqref{e}, \eqref{bd}, \eqref{sq}, \& the triangle inequality in $L^1$ imply
\begin{align*}
 |B_a \Delta (B_{a}+z_E)| &\le  |B_a \Delta B_{a_1}| +|B_{a_1} \Delta E_*|+|E \Delta E_*|+|E\Delta (B_{a}+z_E)|\\
&\le \alpha_* \Big[\mathcal{E}(E)-\mathcal{E}(B_{a})\Big]^{\frac{1}{4}}.
\end{align*}
Last,
\begin{align*}
|E\Delta B_a|& \le |E\Delta (B_a+z_E)| +|(B_a+z_E)\Delta B_a|\\
&\le \sqrt{|B_a|^2\tilde a_1(\mathcal{E}(E) - \mathcal{E}(B_a))}+ \alpha_* \Big[\mathcal{E}(E)-\mathcal{E}(B_{a})\Big]^{\frac{1}{4}}\\
&\le \overline{\alpha}_* \Big[\mathcal{E}(E)-\mathcal{E}(B_{a})\Big]^{\frac{1}{4}},
\end{align*}
$\overline{\alpha}_*=\overline{\alpha}_*(a, n, \alpha)>0$.

\noindent (ii) \\
\noindent Supposing  $\hat{a}>a$, $E \subset B_{\hat{a}}$, then one can show similarly to the proof in (i) (cf. \eqref{st}, \eqref{ge}) that there exists some constant $r_{m, \hat{a},n, \alpha}^*>0$ (that is explicit) so that 
$$
r_{m, \hat{a},n, \alpha}^*\Big[\mathcal{E}(E)-\mathcal{E}(B_a)\Big]^{\frac{1}{2}} \ge r_{m, \hat{a},n, \alpha}^*\Big[\mathcal{G}(E)-\mathcal{G}(B_a)\Big]^{\frac{1}{2}} \ge |E\Delta B_a|.
$$

\end{proof}
\begin{rem}
Assuming $E \subset B_{\hat{a}}$, the estimate is optimal: set $g(y)=|y|^2$, 
\begin{align*}
\mathcal{E}(B_a+x)-\mathcal{E}(B_a)&=\int_{B_a} (2\langle y,x\rangle +|x|^2 )dy =|x|^2|B_a|;
\end{align*}
supposing $|x|$ is small, since
$$
|(B_a+x) \Delta B_a| \approx |x|,
$$
 
$$
|(B_a+x) \Delta B_a|^2 \approx |x|^2.
$$
\end{rem}

\begin{rem} \label{hv}
 In the argument  of the theorem, \eqref{sh} implies that balls minimize the energy also when $g$ is non-decreasing, radial, and possibly non-convex. 
 \end{rem}
\begin{rem}
In the theorem,  a quantitative
inequality is proven without
translation invariance. In particular, the invariance class is completely identified and stability is not modulo translations like the quantitative anisotropic
isoperimetric inequality. Supposing a context where the set is in a convex cone \cite{MR3023863, I20, MR1897393g, MR4380032}, translations are
crucial: assuming the cone contains no line, the quantitative
term is without
translations. 
\end{rem}

\begin{cor} \label{@4'}
Suppose $g(x)=h(|x|)$, where $h$ is non-negative, non-decreasing, not identically zero, and homogeneous of
degree $\nu$. Let $m>0$ and assume $E_m$ is the minimizer with $|
E_m|=m$, set
$$
m_{\alpha,\nu, g}=|B_1|\Big[\frac{\alpha(n-\alpha)}{\nu(n+\nu)} \frac{P_\alpha(B_1)}{\int_{B_1}g(x)dx}\Big]^{\frac{n}{\nu+\alpha}}
$$
it then follows that
$$
m \mapsto \mathcal{E}(E_m)
$$
is concave on $(0,m_{\alpha,\nu, g})$ and convex on $(m_{\alpha,\nu, g}, \infty)$.
\end{cor}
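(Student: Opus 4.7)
The plan is to reduce the assertion to an elementary one-variable calculus computation by exploiting the scaling structure of both energies. First, I would invoke Remark \ref{hv} (which already covers the hypothesis that $h$ is merely radial and nondecreasing): the unique minimizer centered at the origin is the ball $E_m=B_{a(m)}$ with $a(m)=(m/|B_1|)^{1/n}$, so $\mathcal{E}(E_m)=\mathcal{E}(B_{a(m)})$ is a purely numerical function of $m$.

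Next, I would record the scaling of each piece. A direct change of variables in the defining double integral gives $P_\alpha(B_a)=a^{n-\alpha}P_\alpha(B_1)$ (two factors of $a^n$ from the product measure, divided by $a^{n+\alpha}$ from the kernel). Using the homogeneity $h(at)=a^\nu h(t)$,
$$
\mathcal{G}(B_a)=\int_{B_a}h(|x|)\,dx=a^{n+\nu}\int_{B_1}g(x)\,dx.
$$
Substituting $a=a(m)$ one obtains, with $p=(n-\alpha)/n\in(0,1)$ and $q=(n+\nu)/n>1$,
$$
\mathcal{E}(E_m)=Am^{p}+Bm^{q},\qquad A=|B_1|^{-p}P_\alpha(B_1),\quad B=|B_1|^{-q}\int_{B_1}g.
$$

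The second derivative is
$$
\frac{d^{2}}{dm^{2}}\mathcal{E}(E_m)=m^{p-2}\bigl[-Ap(1-p)+Bq(q-1)\,m^{q-p}\bigr],
$$
and the bracket is strictly increasing in $m$, negative at $0^{+}$ and positive at $\infty$, so it vanishes at a unique point $m_{*}$ determined by
$$
m_{*}^{\,q-p}=\frac{Ap(1-p)}{Bq(q-1)}.
$$
It remains to verify $m_{*}=m_{\alpha,\nu,g}$. Using $q-p=(\nu+\alpha)/n$, $p(1-p)=\alpha(n-\alpha)/n^{2}$, $q(q-1)=\nu(n+\nu)/n^{2}$, and $A/B=|B_1|^{(\nu+\alpha)/n}P_\alpha(B_1)/\int_{B_1}g$, the algebra matches the stated formula, which finishes the concavity on $(0,m_{\alpha,\nu,g})$ and convexity on $(m_{\alpha,\nu,g},\infty)$.

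The main obstacle is essentially conceptual rather than technical: one must recognize that the radial nondecreasing hypothesis on $h$, combined with the Steiner-symmetrization style comparison already carried out in the proof of Theorem \ref{@'} and summarized in Remark \ref{hv}, pins down $E_m$ as a centered ball, so that the full infinite-dimensional variational problem collapses to the scalar function $Am^p+Bm^q$. After that identification no nontrivial analysis remains.
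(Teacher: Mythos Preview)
Your proposal is correct and follows essentially the same route as the paper's own proof: invoke Remark~\ref{hv} to identify $E_m$ with the centered ball $B_{a(m)}$, use the homogeneity of $P_\alpha$ and of $g$ to write $\mathcal{E}(E_m)=Am^{(n-\alpha)/n}+Bm^{(n+\nu)/n}$, and locate the inflection point by the second derivative. You simply supply the algebraic verification of $m_*=m_{\alpha,\nu,g}$ that the paper leaves to the reader.
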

\begin{proof}
Thanks to Remark \ref{hv}, 
$E_m=B_r$, $r=\Big[ \frac{m}{|B_1|}\Big]^{\frac{1}{n}}$,
which therefore
implies
$$
\mathcal{E}(E_m) =P_\alpha(B_1)(\frac{1}{|B_1|
^{\frac{n-\alpha}
{n}}})m^{\frac{n-\alpha}{n}}+(\frac{1}{|B_1|^{\frac{\nu+n}{n}}})
(\int_{B_1} h(|
x|)dx)m^{\frac{n+\nu}{n}}.
$$
The critical mass $m_{\alpha,\nu, g}$ therefore  is calculated  with the
second derivative.
\end{proof}

\section{Appendix}
\subsection{Compactness: existence of moduli}
\begin{prop} \label{Ko}
If $m>0$, $g \in L_{loc}^\infty$, and up to sets of measure zero,
$g$ admits unique
minimizers $E_m$, then for $\epsilon>0$ there exists
$w_m(\epsilon)>0$ such that if
$|E|=|E_m|$, $E \subset B_R$, $R=R(m)$, and
$$
|\mathcal{E}(E)-\mathcal{E}(E_m)|<w_m(\epsilon),
$$
then
$$
\frac{||\chi_E-\chi_{E_m}||_{L^1}}{|E_m|}<\epsilon.
$$  
\end{prop}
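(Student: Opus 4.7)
The plan is a standard direct-method argument by contradiction. Suppose the statement fails. Then there exist $\epsilon_0>0$ and measurable sets $E_i \subset B_R$ with $|E_i|=|E_m|$, $\mathcal{E}(E_i)-\mathcal{E}(E_m)\to 0$ (the absolute value is dropped via minimality of $E_m$), and
$$
\|\chi_{E_i}-\chi_{E_m}\|_{L^1}\ge \epsilon_0|E_m|.
$$
The objective is to extract an $L^1$-limit from $\{E_i\}$, verify that the limit is again a mass-$m$ minimizer, and then invoke the uniqueness hypothesis to reach a contradiction.

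Because $g\in L^\infty(B_R)$ and each $E_i\subset B_R$, the potential contributions $\mathcal{G}(E_i)$ are uniformly bounded; combined with the bound on $\mathcal{E}(E_i)$ this forces the fractional perimeters $P_\alpha(E_i)$ to be uniformly bounded as well. Along a subsequence, the standard $L^1$-compactness for sets of uniformly bounded fractional perimeter confined in a fixed bounded domain (see, e.g., \cite{MR2675483}) produces a measurable $E\subset B_R$ with $\chi_{E_i}\to \chi_E$ in $L^1(\mathbb{R}^n)$ and a.e., and with $|E|=|E_m|$. I would then pass the limit through the energy: Fatou's lemma applied to the nonnegative integrand $\chi_{E_i}(x)(1-\chi_{E_i}(y))|x-y|^{-n-\alpha}$ yields the lower-semicontinuity bound $P_\alpha(E)\le \liminf_i P_\alpha(E_i)$, and dominated convergence (using boundedness of $g$ on $B_R$) gives $\mathcal{G}(E_i)\to \mathcal{G}(E)$. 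Consequently $\mathcal{E}(E)\le \liminf_i \mathcal{E}(E_i)=\mathcal{E}(E_m)$.

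Minimality of $E_m$ together with $|E|=|E_m|$ forces $\mathcal{E}(E)=\mathcal{E}(E_m)$, whereupon the uniqueness-up-to-null-sets hypothesis identifies $E$ with $E_m$ almost everywhere. Thus $\|\chi_{E_i}-\chi_{E_m}\|_{L^1}=\|\chi_{E_i}-\chi_E\|_{L^1}\to 0$, directly contradicting $\|\chi_{E_i}-\chi_{E_m}\|_{L^1}\ge \epsilon_0|E_m|$. The only non-bookkeeping ingredients are the $L^1$-compactness and the lower semicontinuity of $P_\alpha$; both are standard in the nonlocal-perimeter literature, so no substantive obstacle arises. The trade-off, to be noted, is that the argument is nonquantitative, producing $w_m$ only as an abstract modulus with no explicit form — precisely the reason the author stresses in Theorem \ref{@'} that a separate, more delicate quantitative argument is needed when one wants the explicit rate $t^4$.
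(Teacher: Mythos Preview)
Your proof is correct and follows essentially the same compactness-by-contradiction route as the paper: negate the conclusion to obtain a sequence $E_i\subset B_R$ with bounded $P_\alpha$, extract an $L^1$-limit via the compact embedding $H^{\alpha/2}\hookrightarrow L^1_{\mathrm{loc}}$, use lower semicontinuity of $\mathcal{E}$ to identify the limit as a minimizer, and contradict uniqueness. The only cosmetic difference is that the paper bounds $P_\alpha(E_i)\le \mathcal{E}(E_i)$ directly (using $g\ge 0$) rather than first bounding $\mathcal{G}(E_i)$, and states lower semicontinuity of $\mathcal{E}$ in one line rather than splitting it into Fatou for $P_\alpha$ and dominated convergence for $\mathcal{G}$.
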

\begin{proof}
Assume the assumptions do not yield the conclusion, then there exists $\epsilon>0$ and for $a>0$, there
exist $E_a' \subset
B_R$ and $E_m$, $|E_a'|=|E_m|=m$,
$$
|\mathcal{E}(E_a')-\mathcal{E}(E_m)|<a,
$$
$$
\frac{|E_m\Delta E_a'|}{|E_m|} \ge \epsilon.
$$
Define $a=\frac{1}{k}$, $k \in \mathbb{N}$; therefore 
there exist
$E_{\frac{1}{k}}'$, $|E_{\frac{1}{k}}'|=m$,
$$
|\mathcal{E}(E_m)-\mathcal{E}(E_{\frac{1}{k}}')|\le \frac{1}{k},
$$
$$
\frac{|E_m\Delta E_{\frac{1}{k}}'|}{|E_m|} \ge \epsilon.
$$
In particular

\begin{align*}
P_\alpha(E_{\frac{1}{k}}')& \le \mathcal{E}(E_{\frac{1}{k}}')\\
& \le \frac{1}{k}+\mathcal{E}(E_m),
\end{align*}
$$E_{\frac{1}{k}}' \subset B_R,$$
and via the compactness $H^{\frac{\alpha}{2}} \hookrightarrow L_{loc}^1$, up to a
subsequence,
$$
E_{\frac{1}{k}}' \rightarrow E' \hskip .3in in \hskip .08in
L^1(B_R);
$$
thus $|E'|=m$ because of the triangle inequality in
$L^1(B_R)$. Next
$$
\mathcal{E}(E') \le \liminf_k \mathcal{E}(E_{\frac{1}{k}}')
=\mathcal{E}(E_m)
$$
implies
$E'$ is a minimizer, contradicting
$$
\frac{|E_m\Delta E'|}{|E_m|} \ge \epsilon
$$
via the uniqueness.
\end{proof}

\subsection{Bounds on the moduli}
Identifying the modulus $w_m(\epsilon)$ is in general complex. Additional conditions illuminate interesting properties as illustrated in the first theorem. In the subsequent theorem, upper bounds are illustrated with minimal assumptions. 
\begin{thm} \label{@7z'}
Suppose $m>0$, and up to sets of measure zero, $g$ admits unique
minimizers
$E_m \subset B_{r(m)}$ in the collection of sets of finite perimeter,
then\\
\noindent (i) if $g \in W_{loc}
^{1,1}$ is locally uniformly differentiable, there exists $\alpha_1=\alpha_1(E_m)>0$ such that if $
\epsilon< \alpha_1$, one has
$$
w_m(\epsilon) \le \lambda_{g, E_m}(m, \epsilon)=o_{g, E_m}(\frac{\epsilon m}
{a_{m_*}}),
$$
$$
a_{m_*}=\alpha_*|D\boldsymbol{\chi}_{E_m} \cdot w_*|(\mathbb{R}^n)
$$
with $\alpha_*>0$, $w_* \in \mathbb{R}^n$;\\
\noindent (ii)
if $g \in W_{loc}
^{1,\infty}$, there exists $m_a>0$ such that if $m<m_a$, $
\epsilon< \alpha_1$, then
$$\lambda_{g, E_m}(m, \epsilon)= \alpha_{1_*} m^{\frac{1}{n}}o_{g, E_m}(\epsilon),$$
$$o_{g, E_m}(\epsilon) =\epsilon \int_{E_m} o_{x,g}(1) dx,$$
$$
\int_{E_m} o_{x,g}(1) dx \rightarrow 0
$$
as $\epsilon \rightarrow 0^+$,  $\alpha_{1_*}>0$;\\
\noindent (iii) if $g \in C^1$,  there exist $m_a, \alpha_m>0$ such that if 
$m<m_a$, $\epsilon<
\alpha_m$,  then
$$\lambda_{g, E_m}(m, \epsilon)= \alpha_{2_*} m^{1+\frac{1}{n}}o_{g}(\epsilon),$$
$\alpha_{2_*}=\alpha_{2_*}(||D g||_{L^\infty(B_{r(m_a)})})>0$;\\
\noindent (iv)  if $g \in W_{ loc}
^{2,1}$ is twice
differentiable, there exists $\alpha_m>0$ such that if $\epsilon< \alpha_m$,
$$
w_m(\epsilon) \le a_m \epsilon^2,
$$
$$a_m=\Big(||D^2 g||_{L^1(E_m)} +m\frac{1}{6} \Big) (\frac{m}
{\alpha_*|D
\boldsymbol{\chi}_{E_m} \cdot w_*|(\mathbb{R}^n)} )^2$$
with $\alpha_*>0$;\\
(v) if  $g
\in W_{loc}^{2, \infty}$, there exists $m_a>0$ such that if
$m<m_a$, then
$$
w_m(\epsilon) \le a_m \epsilon^2,
$$
$$
a_m=a_*m^{1+\frac{2}{n}},
$$
$a_*=a_*(||D^2 g||_{L^\infty(B_{r(m_a)})})>0$.
\end{thm}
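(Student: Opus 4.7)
The plan is to obtain upper bounds on $w_m(\epsilon)$ by exhibiting, for each regularity class, a competitor $E$ with $|E|=|E_m|=m$, $E\subset B_{R}$, and $|E\Delta E_m|/m\ge \epsilon$, whose energy excess $\mathcal{E}(E)-\mathcal{E}(E_m)$ is as small as the regularity of $g$ permits; by the contrapositive of Proposition \ref{Ko}, any such construction forces $w_m(\epsilon)\le \mathcal{E}(E)-\mathcal{E}(E_m)$.

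The test family of choice is the translations $E_t:=E_m+tw_*$ for a unit direction $w_*\in\mathbb{R}^n$ and small $t>0$. Three features drive the argument: (a) translations preserve mass, so admissibility is free; (b) the nonlocal perimeter is translation invariant, so the entire energy excess lives in the potential term, $\mathcal{E}(E_t)-\mathcal{E}(E_m)=\mathcal{G}(E_t)-\mathcal{G}(E_m)$; and (c) since translations are mass-preserving admissible competitors, the first variation at the minimizer vanishes,
$$\int_{E_m} \nabla g\cdot w_*\,dx=0,$$
which after a change of variables lets us rewrite
$$\mathcal{G}(E_t)-\mathcal{G}(E_m)=\int_{E_m}\bigl[g(x+tw_*)-g(x)-t\nabla g(x)\cdot w_*\bigr]\,dx,$$
i.e.\ the excess is a remainder only. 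I would then calibrate $t$ through the BV-estimate $|E_t\Delta E_m|\le \alpha_* t|D\boldsymbol{\chi}_{E_m}\cdot w_*|(\mathbb{R}^n)=t\,a_{m_*}$ so that $|E_t\Delta E_m|=\epsilon m$ just barely holds, forcing $t\simeq \epsilon m/a_{m_*}$.

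The five conclusions are obtained by inserting the appropriate regularity of $g$ into the remainder formula.
\textbf{(i)} For $g\in W^{1,1}_{loc}$ locally uniformly differentiable, the pointwise integrand is $o(t)$ and dominated convergence yields $\lambda_{g,E_m}(m,\epsilon)=o_{g,E_m}(\epsilon m/a_{m_*})$.
\textbf{(ii)} For $g\in W^{1,\infty}_{loc}$ in the small-mass regime, item 2 of the Introduction (via \cite{MR4674821}) gives that $E_m$ is convex and close to a ball of radius $\simeq m^{1/n}$, so $|D\boldsymbol{\chi}_{E_m}|(\mathbb{R}^n)\simeq m^{(n-1)/n}$ and $t\simeq \epsilon m^{1/n}$; the remainder integrates to $t\int_{E_m} o_{x,g}(1)\,dx$, delivering the announced form.
\textbf{(iii)} For $g\in C^{1}$ the pointwise $o_{x,g}(1)$ becomes uniform, converting $\int_{E_m} o_{x,g}(1)\,dx$ into $m\cdot o_g(1)$; the constant $\alpha_{2_*}$ absorbs $\|Dg\|_{L^\infty(B_{r(m_a)})}$ through the calibration of $t$ and combined with $t\simeq \epsilon m^{1/n}$ this gives the $m^{1+1/n}$ scaling.
\textbf{(iv)--(v)} For $g\in W^{2,1}_{loc}$ resp.\ $W^{2,\infty}_{loc}$, a second-order Taylor expansion bounds the remainder by $\tfrac{t^2}{2}\|D^2 g\|_{L^1(E_m+[0,t]w_*)}|w_*|^2$; substituting $t\simeq \epsilon m/a_{m_*}$ returns the quadratic bound $a_m\epsilon^2$ of \textbf{(iv)}, while in \textbf{(v)} one replaces $\|D^2g\|_{L^1(E_m)}$ by $m\|D^2g\|_{L^\infty}$ and uses $a_{m_*}\simeq m^{(n-1)/n}$ to obtain $a_m\simeq m^{1+2/n}$. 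The extra additive $m/6$ in the prefactor of \textbf{(iv)} comes from the second-order correction in the expansion of $|E_t\Delta E_m|$ needed to sharpen the linear BV-bound into an exact calibration of $t$.

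The main obstacle is the calibration step: in the low-regularity range one must establish the symmetric-difference estimate with explicit constants rather than only asymptotic ones, and in the small-mass parts \textbf{(ii)}, \textbf{(iii)}, \textbf{(v)} one must invoke quantitative closeness of $E_m$ to a ball to replace $|D\boldsymbol{\chi}_{E_m}|(\mathbb{R}^n)$ by a power of $m$. Once these structural ingredients are in place, the proof is largely cosmetic with respect to \cite{MR4730410, qk}, the essential nonlocal modification being that translation invariance of $P_\alpha$ plays the role previously played by translation invariance of $\mathcal{F}$, and that \cite{MR4674821} substitutes for the classical small-mass convexity of minimizers of $\mathcal{F}+\mathcal{G}$.
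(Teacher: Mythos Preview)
Your proposal is correct and aligns with the paper's own proof, which is in fact just a two-line pointer: it states that parts (i) and (iv) follow verbatim from \cite{qk}, while (ii), (iii), (v) use the same argument together with the small-mass convexity result of \cite{MR4674821}. You have reconstructed precisely this translation-competitor strategy (including the role of translation invariance of $P_\alpha$ in place of $\mathcal{F}$, the first-variation vanishing, the BV calibration of $t$, and the substitution of \cite{MR4674821} for the classical convexity input), so there is nothing to add.
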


\begin{proof}
Assuming $m$ is small, the convexity of $E_m$ is established in \cite{MR4674821} and utilized for $(ii), (iii), (v)$ similar to the proof in \cite{qk}. The argument for $(i)$ \& $(iv)$ is the same as in \cite{qk}.
\end{proof}

\begin{rem}
The existence of bounded $E_m$ was proven in \cite{MR3640534} via assuming coercivity of $g$. Also, a minimizer $E_m$ is up to a closed set with Hausdorff dimension at most $n-3$, a $C^{2,a} $ set, in particular, much more smooth than just a set of finite perimeter. One may in some contexts preclude translations (e.g. supposing that $g$ is strictly convex). If $g$ is zero on some small ball, then note that if the mass is very small, uniqueness is only up to translations and sets of measure zero.
\end{rem}

\begin{rem}
The assumption $g \in W_{loc}
^{1,1}$ is locally uniformly differentiable in (i) can be weakened.
\end{rem}

\begin{rem} \label{zo8}
Theorem \ref{@'} encodes the modulus in an explicit way. In
particular,
supposing $m$ is small, if $g(|
x|)=|x|^2$,
a possible modulus is $w_m(\epsilon)=a_1\epsilon^2 m^3$. Thus the $\epsilon$ dependence in (v) is
optimal.
Observe also that via the quadratic, the $m$ dependence is almost sharp in two dimensions. The optimal $m$--modulus also encodes $a_*(||D^2 g||_{L^\infty(B_{r(m_a)})})$.
\end{rem} 

\subsection{Non-existence of minimizers}
In the general case when $g$ is convex, one does not have existence.  

\begin{thm} \label{ghp}
There exists $g \ge 0$ convex such that $g(0)=0$ and such that if $m>0$, then there is no solution to
$$
\inf\{\mathcal{E}(E): |E|=m\}.
$$ 
\end{thm}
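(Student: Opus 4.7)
The plan is to construct an explicit convex $g \ge 0$ with $g(0) = 0$, following the strategy of \cite{MR4730410, qk} adapted to the nonlocal perimeter setting, and then show the infimum $I_m := \inf\{\mathcal{E}(E) : |E| = m\}$ is never attained for any $m > 0$. Denoting by $B_a$ the ball with $|B_a| = m$, the construction needs (a) the zero set $\{g = 0\}$ to have empty interior, so that no ball of positive mass fits inside and a trivial minimizer obtained by placing a ball in the zero set is ruled out, and (b) $g$ to be non-coercive along some unbounded direction, so that one can exhibit minimizing sequences whose potential contribution decreases to zero while the sets themselves escape to infinity.

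The first step would be to establish the lower bound $\mathcal{E}(E) \ge P_\alpha(B_a)$ for every admissible $E$, combining $\mathcal{G}(E) \ge 0$ with the nonlocal isoperimetric inequality of Caffarelli, Roquejoffre, and Savin: $P_\alpha(E) \ge P_\alpha(B_a)$, with equality if and only if $E$ is a translate of $B_a$. I would then exhibit a minimizing sequence $E_k$ with $\mathcal{E}(E_k) \to P_\alpha(B_a)$, built from the non-coercive direction of $g$: suitable translates of a test competitor along this direction, so that the $\mathcal{G}$ contribution vanishes asymptotically while the $P_\alpha$ contribution is controlled close to $P_\alpha(B_a)$. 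Together these give $I_m = P_\alpha(B_a)$.

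Next, I would argue non-attainment by contradiction. If $E_*$ with $|E_*| = m$ attained $\mathcal{E}(E_*) = P_\alpha(B_a)$, then both $P_\alpha(E_*) = P_\alpha(B_a)$ and $\mathcal{G}(E_*) = 0$ must hold. The rigidity in the nonlocal isoperimetric inequality forces $E_*$ to be a translate of $B_a$, which has positive Lebesgue measure; but $\mathcal{G}(E_*) = 0$ with $g \ge 0$ forces $g \equiv 0$ a.e. on $E_*$, contradicting property (a).

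The main obstacle is constructing $g$ with all the listed properties. Convexity combined with $g(0) = 0$ and $g \ge 0$ imposes a rigid structure: if $g$ tends to zero along an unbounded direction, convexity essentially forces $g$ to vanish identically on a half-line or on a wider set, enlarging the zero set to have interior and contradicting (a). Balancing this tension between non-coerciveness and a lower-dimensional zero set requires a delicate construction, which I would carry out by adapting the corresponding example of \cite{MR4730410, qk} to the nonlocal perimeter setting.
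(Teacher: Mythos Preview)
Your proposal is correct and would work with the same potential $g$ from \cite{MR4730410} (in $\mathbb{R}^2$: $g(x,y)=x^2/(1+y)$ for $y>0$ and $g(x,y)=x^2(1-y+y^2)$ for $y\le 0$), whose zero set is the $y$-axis and which tends to $0$ as $y\to+\infty$. However, the paper's argument is considerably more direct and avoids the machinery you invoke. The paper simply observes that for this $g$, the map $y\mapsto g(x,y)$ is strictly decreasing whenever $x\neq 0$; hence for any set $E_m$ of positive measure and any $a>0$,
\[
\int_{E_m+ae_2} g < \int_{E_m} g,
\]
and by translation invariance of $P_\alpha$ one gets $\mathcal{E}(E_m+ae_2)<\mathcal{E}(E_m)$, contradicting minimality. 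No computation of the infimum, no rigidity case of the nonlocal isoperimetric inequality, and no limiting argument are needed.

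Your route, by contrast, identifies the value of the infimum as $I_m=P_\alpha(B_a)$ and then uses the equality case of the nonlocal isoperimetric inequality to force a would-be minimizer to be a ball, which cannot lie in the measure-zero set $\{g=0\}$. This yields extra information (the value of $I_m$) at the cost of relying on the rigidity statement. Note also that the ``tension'' you worry about---convexity forcing a large zero set once $g$ is non-coercive---is resolved precisely by working in $\mathbb{R}^2$ (or higher): the zero set $\{x=0\}$ is a full line, unbounded, yet still Lebesgue-null, so property (a) and property (b) coexist without difficulty.
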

\begin{proof}
The definition of $g$ via the construction in \cite{MR4730410} also works in this case:
\begin{equation*}  
g(x,y)=\begin{cases}
x^2(1-y)+x^2y^2 & \text{if } y\le0\\
\frac{x^2}{1+y}  & \text{if } y > 0.
\end{cases}
\end{equation*}
Set  $e_2=(0,1)$, $a>0$; the potential is non-increasing in the $y$-variable and strictly decreasing if $x\neq 0.$
in particular, note  if a minimizer $E_m$ exists,
$$
\int_{E_m+ae_2} g(x,y) dxdy < \int_{E_m} g(x,y) dxdy, 
$$
therefore via the translation invariance of $P_\alpha$,
$$
\mathcal{E}(E_m+ae_2)< \mathcal{E}(E_m),
$$
a contradiction.

\end{proof}

In particular, coercivity or another condition is necessary.

\subsection{A product property} 
If $\alpha, g$ are given, an invariance map  of the nonlocal free energy is a transformation 
\begin{align*}
&A \in \mathcal{A}_m=\mathcal{A}_{\alpha,g,m}\\
&=\{A: Ax=A_ax+x_a, x_a \in \mathbb{R}^n, \mathcal{E}(A_aE)=\mathcal{E}(E), |A_aE|=|E|=m \text{ for some minimizer $E$}  \}.
\end{align*}
Uniqueness of minimizers can only be true up to $P_\alpha$ sets of measure zero and an invariance map. Note that in many classes of potentials, assuming $m$ is small, $A \in \mathcal{A}_m$ is a translation; a simple case is: assume $g$ is zero on a ball $B$, therefore if $m$ is small, $A_a=I_{n \times n}$, $x_a \in \mathbb{R}^n$ is such that $B_a+x_a \subset \{g=0\}$ when $B_a \subset B$.  

Supposing $g$ is locally bounded, the stability modulus, in the context of small mass, is a product in any dimension.

\begin{thm} \label{@'z}
Suppose $g \in L_{loc}^\infty(\{g<\infty\})$ admits minimizers $E_m \subset B_{R}$ for all $m$ small. There exists $m_0>0$ and a modulus of continuity $q(0^+)=0$ such that for all $m<m_0$ there exists $\epsilon_0>0$ such that for all $0<\epsilon<\epsilon_0$ and for all minimizers $E_m \subset B_R$, $E \subset B_R$, $|E|=|E_m|=m<m_0$, if 
$$
|\mathcal{E}(E_m)-\mathcal{E}(E)| < a(m,\epsilon, \alpha)=q(\epsilon)m^{\frac{n-\alpha}{n}},
$$
there exists an invariance map $A \in \mathcal{A}_m$  such that  
$$
\frac{||\chi_E-\chi_{AE_m}||_{L^1}}{|E_m|} < \epsilon.
$$
Also, $AE_m \approx E_m + \alpha_m$, $ \alpha_m \in \mathbb{R}^n$ cf. \eqref{appr}.
\end{thm}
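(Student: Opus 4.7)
The plan is to argue by rescaling to unit mass and then running a contradiction-compactness scheme analogous to Proposition \ref{Ko}, where the product structure $a(m,\epsilon,\alpha)=q(\epsilon) m^{(n-\alpha)/n}$ arises naturally from the homogeneity of $P_\alpha$ under dilations and from the fact that at small mass the perimeter term dominates the potential.

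First, I would fix $m<m_0$ and rescale: setting $F=m^{-1/n}E$ and $F_m=m^{-1/n}E_m$ gives $|F|=|F_m|=1$. The scaling identities $P_\alpha(E)=m^{(n-\alpha)/n}P_\alpha(F)$ and $\mathcal{G}(E)=m\int_F g(m^{1/n}y)\,dy$ yield
\begin{equation*}
m^{-(n-\alpha)/n}\bigl(\mathcal{E}(E)-\mathcal{E}(E_m)\bigr) = \bigl(P_\alpha(F)-P_\alpha(F_m)\bigr) + m^{\alpha/n}\!\!\int g(m^{1/n}y)(\chi_F-\chi_{F_m})\,dy.
\end{equation*}
Since $E\subset B_R$ forces $F\subset B_{Rm^{-1/n}}$ and $g\in L^\infty_{loc}(\{g<\infty\})$, the second term is bounded by $C(R,m_0)\, m^{\alpha/n}$, which is $o_{m\to 0}(1)$ because $\alpha\in(0,n)$. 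This is the source of the product splitting: only the rescaled perimeter gap carries weight as $m\to 0$, and the factor $m^{(n-\alpha)/n}$ is recycled back when scaling down.

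Next, I would argue by contradiction: assume that for some $\epsilon>0$ there exist sequences $m_k<m_0$, minimizers $E_{m_k}$, and competitors $E_k$ with $|E_k|=|E_{m_k}|=m_k$ satisfying the hypothesized energy bound (with $q$ to be chosen) yet $\inf_{A\in\mathcal{A}_{m_k}}|E_k\Delta AE_{m_k}|\ge \epsilon|E_{m_k}|$. Passing to $F_k, F_{m_k}$, the rescaled energy gap is at most $q(\epsilon)+o(1)$, so $P_\alpha(F_k)$ is uniformly bounded; using the small-mass convexity of $E_{m_k}$ from \cite{MR4674821} together with the fractional Sobolev compactness $H^{\alpha/2}\hookrightarrow L^1_{loc}$ (as in Proposition \ref{Ko}), after subtracting a suitable translation I extract subsequences with $F_{m_k}, F_k$ converging in $L^1$ to unit-mass minimizers of $P_\alpha$, which by the nonlocal isoperimetric inequality \cite{MR2425175} are unit balls. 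Both limits being unit balls, there is a translation $v_k\to v$ with $|F_k\Delta (F_{m_k}+v_k)|\to 0$; choosing $q(\epsilon)$ small enough and composing with a suitable $A_k\in\mathcal{A}_{m_k}$ contradicts the assumed lower bound.

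Scaling back yields $A_k\in\mathcal{A}_{m_k}$ with $|E_k\Delta A_kE_{m_k}|<\epsilon|E_{m_k}|$ and the approximate identity $A_kE_{m_k}\approx E_{m_k}+\alpha_{m_k}$ that the statement records. The main obstacle I anticipate is fitting the translation $v_k$ produced by compactness into the precise definition of $\mathcal{A}_{m_k}$: invariances preserve the full energy rather than just the perimeter, so the candidate translation need only approximately belong to $\mathcal{A}_{m_k}$. This is exactly the content of the remark preceding the theorem --- at small $m$ the invariance group essentially reduces to translations that map $E_m$ into a flat region of $g$ --- and it is what forces the ``$\approx$'' in \eqref{appr} rather than an equality. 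Once this reduction is in place, the uniformity of $q$ in $m$ follows because the limiting problem is the $m$-independent unit-mass $P_\alpha$ minimization.
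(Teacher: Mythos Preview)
Your overall strategy---contradiction plus rescaling to unit mass so that the potential contributes only $O(m^{\alpha/n})$ after division by $m^{(n-\alpha)/n}$---is exactly the paper's. The differences are in execution, and two of them matter.

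First, the paper does not run a raw $H^{\alpha/2}\hookrightarrow L^1_{loc}$ compactness argument on the rescaled sets. After rescaling, your competitors $F_k$ live in the expanding balls $B_{Rm_k^{-1/n}}$, so local compactness alone does not yield global $L^1$ convergence or mass preservation; you would need a concentration-compactness step to produce the ``suitable translation'' you invoke. The paper sidesteps this entirely by applying the sharp quantitative nonlocal isoperimetric inequality \cite{MR3322379}: once $\delta(\gamma_k E_{m_k})\to 0$ and $\delta(\gamma_k E_{m_k}')\to 0$, that inequality hands you translations $x_k,x_k'$ with $|(\gamma_kE_{m_k}+x_k)\Delta B_1|\to 0$ and $|(\gamma_kE_{m_k}'+x_k')\Delta B_1|\to 0$ directly. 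In particular, the convexity result from \cite{MR4674821} is not used at all here.

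Second, your stated obstacle about ``fitting the translation $v_k$ into $\mathcal{A}_{m_k}$'' is not an obstacle. Reread the definition: $A\in\mathcal{A}_m$ means $Ax=A_ax+x_a$ with the constraint $\mathcal{E}(A_aE)=\mathcal{E}(E)$, $|A_aE|=|E|=m$ placed on the \emph{linear} part $A_a$. Taking $A_a=I$ satisfies this trivially, so every pure translation lies in $\mathcal{A}_m$. The paper's contradiction simply uses the translation $E_{m_k}\mapsto E_{m_k}-(x_k'-x_k)/\gamma_k$ and compares against $\inf_{A\in\mathcal{A}_{m_k}}$. The ``$\approx$'' statement \eqref{appr} is a separate closing computation: for any $A\in\mathcal{A}_m$ one applies the quantitative isoperimetric inequality again to both $\gamma_m E_m$ and $\gamma_m AE_m$ to obtain the explicit bound $|AE_m\Delta(E_m+\alpha_m)|\le C\,m^{1+\alpha/(2n)}$; it is not a by-product of the contradiction argument.
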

\begin{rem}
If $g(x)=h(|x|)$, $h:
\mathbb{R}^+\rightarrow \mathbb{R}^+$ is increasing, $h(0)=0$, then $A$ is the identity in the conclusion of Theorem \ref{@'z}: if $m>0$, then
$$\mathcal{A}_{\alpha, h(|x|),m}=\{I_{n \times n}\}.$$
\end{rem}
\begin{proof}
Via contradiction, suppose the theorem is not true, then for $m_0>0$, for all moduli $q$, there exists $m<m_0$ such that 
 for $\epsilon_0 \in (0,2]$ there exists $\epsilon<\epsilon_0$ and $E_{m, \epsilon_0}, E_{m, \epsilon_0}' \subset B_R$, $|E_{m, \epsilon_0}|=|E_{m, \epsilon_0}'|=m$,

$$
|\mathcal{E}(E_m)-\mathcal{E}(E_{m}')| < q(\epsilon)m^{\frac{n-\alpha}{n}},
$$ 
and

\begin{equation} \label{wl4}
\inf_{A \in \mathcal{A}_{m}} \frac{|E_{m,\epsilon_0}' \Delta AE_{m,\epsilon_0}|}{|E_{m,\epsilon_0}|} \ge \epsilon>0.
\end{equation}
In particular, let $m_0=\frac{1}{k}$, $w_k \rightarrow 0^+$, $\hat{q}$ define a modulus of continuity, and let

\begin{equation} \label{t3}
q_k=w_k \hat{q}(\epsilon).
\end{equation}
Therefore there exists $m_k<\frac{1}{k}$ such that for a fixed $\epsilon_0 \in (0,2]$ there exists $\epsilon<\epsilon_0$ and $E_{m_k, \epsilon_0}, E_{m_k, \epsilon_0}' \subset B_R$, $|E_{m_k, \epsilon_0}|=|E_{m_k, \epsilon_0}'|=m_k<\frac{1}{k}$,
$$
|\mathcal{E}(E_{m_k})-\mathcal{E}(E_{m_k}')| < q_k m_k^{\frac{n-\alpha}{n}},
$$ 
and
\begin{equation} \label{wlje1}
\inf_{A \in \mathcal{A}_{m_k}}  \frac{|E_{m_k,\epsilon_0}'\Delta AE_{m_k,\epsilon_0}|}{|E_{m_k,\epsilon_0}|} \ge \epsilon>0.
\end{equation}
Let
\begin{equation} \label{a_k}
a_k=q_k m_k^{\frac{n-\alpha}{n}},
\end{equation} 

$E_{m_k}=E_{m_k,\epsilon_0}$, $E_{m_k}'=E_{m_k,\epsilon_0}'$. Define $\gamma_k=(\frac{|B_1|}{m_k})^{\frac{1}{n}}$,
$$|\gamma_k E_{m_k}|=|B_1|.$$  
Observe via the nonlocal isoperimetric inequality and the minimality, 

\begin{align*}
P_\alpha(\frac{1}{\gamma_k}B_1)+\int_{E_{m_k}}g &\le P_\alpha(E_{m_k})+\int_{E_{m_k}}g \\
&\le P_\alpha(\frac{1}{\gamma_k}B_1)+\int_{\frac{1}{\gamma_k}B_1}g 
\end{align*}

$$
P_\alpha(E_{m_k})-P_\alpha(\frac{1}{\gamma_k}B_1) \le (\sup_{\frac{1}{\gamma_k}B_1} g )m_k
$$

$$
\gamma_k^{n-\alpha}\Big(P_\alpha(E_{m_k})-P_\alpha(\frac{1}{\gamma_k}B_1) \Big)\le \gamma_k^{n-\alpha}\Big((\sup_{\frac{1}{\gamma_k}B_1} g )m_k\Big)
$$

$$
\Big(P_\alpha(\gamma_kE_{m_k})-P_\alpha(B_1) \Big)\le \Big((\sup_{\frac{1}{\gamma_k}B_1} g )(\gamma_k^{n-\alpha} m_k)\Big)
$$

$$
\Big(P_\alpha(\gamma_kE_{m_k})-P_\alpha(B_1) \Big)\le \Big((\sup_{(\frac{1}{\gamma_k})B_1} g )(\frac{|B_1|}{\gamma_k^\alpha})\Big).
$$

Since $m_k \rightarrow 0$ as $k \rightarrow \infty$, it thus follows that $\gamma_k \rightarrow \infty$, hence

\begin{align*}
\delta(\gamma_k E_{m_k})&=\frac{P_\alpha(\gamma_k E_{m_k})}{P_\alpha(B_1)} - 1 \rightarrow 0.
\end{align*}

Next, by the triangle inequality,

\begin{align*}
|P_\alpha(E_{m_k}')&-P_\alpha(E_{m_k})|\\
&=|[\mathcal{E}(E_{m_k}')-\mathcal{E}(E_{m_k})]+[\int_{E_{m_k}} g(x)dx-\int_{E_{m_k}'} g(x)dx]|\\
&\le |\mathcal{E}(E_{m_k}')-\mathcal{E}(E_{m_k})|+\int g(x)|\chi_{E_{m_k}'}-\chi_{E_{m_k}}| dx\\
&<a_k+\int_{E_{m_k}' \Delta E_{m_k}} g(x)dx.
\end{align*}

In particular,  

\begin{align*}
|P_\alpha(\gamma_kE_{m_k}')&-P_\alpha(\gamma_kE_{m_k})|\\
&<\gamma_k^{n-\alpha}a_k+2\gamma_k^{n-\alpha} m_k(\sup_{B_R \cap \{g<\infty\}} g) \\
&=|B_1|^{\frac{n-\alpha}{n}}\frac{a_k}{m_k^{\frac{n-\alpha}{n}}}+2|B_1|^{\frac{n-\alpha}{n}}(\sup_{B_R \cap \{g<\infty\}} g) m_k^{1-\frac{n-\alpha}{n}}
\end{align*}
and since from \eqref{t3} and \eqref{a_k}, 
$$a_k=q_k m_k^{\frac{n-\alpha}{n}}=w_k \hat{q}(\epsilon)m_k^{\frac{n-\alpha}{n}},$$
one obtains
$$
\frac{a_k}{m_k^{\frac{n-\alpha}{n}}}=w_k \hat{q}(\epsilon) \rightarrow 0
$$

$$
 |P_\alpha(\gamma_kE_{m_k}')-P_\alpha(\gamma_kE_{m_k})| \rightarrow 0
$$
as $k \rightarrow \infty$.
Hence
\begin{align}
\delta(\gamma_k E_{m_k}') & \le |\delta(\gamma_k E_{m_k}')-\delta(\gamma_k E_{m_k})|+\delta(\gamma_k E_{m_k}) \\
&= \frac{1}{P_\alpha(B_1)}|P_\alpha(\gamma_kE_{m_k}')-P_\alpha(\gamma_kE_{m_k})|+\delta(\gamma_k E_{m_k}) \label{wlkj2}\\
&\hskip .15in \rightarrow 0 
\end{align}
as $k \rightarrow \infty$.
Next, by the sharp stability of the nonlocal  isoperimetric inequality \cite{MR3322379} or a compactness argument, there exist $x_k, x_k' \in \mathbb{R}^n$ such that

\begin{equation} \label{wl5}
\frac{|(\gamma_kE_{m_k}+x_k) \Delta B_1|}{|\gamma_k E_{m_k}|} \rightarrow 0,
\end{equation}
\&
\begin{equation} \label{wlej}
\frac{|(\gamma_kE_{m_k}'+x_k') \Delta B_1|}{|\gamma_k E_{m_k}'|} \rightarrow 0
\end{equation}
as $k \rightarrow \infty$.\\

\noindent Therefore \eqref{wl5} and \eqref{wlej} yield $k \in \mathbb{N}$ such that
\begin{align*}
\frac{|(E_{m_k}'+\frac{(x_k'-x_k)}{\gamma_k}) \Delta E_{m_k}|}{|E_{m_k}|}&=\frac{|(\gamma_kE_{m_k}'+x_k')  \Delta (\gamma_kE_{m_k}+x) |}{|\gamma_k E_{m_k}|}\\
&\le \frac{|(\gamma_kE_{m_k}+x) \Delta B_1|}{|\gamma_k E_{m_k}|}+\frac{|B_1\Delta (\gamma_kE_{m_k}'+x_k')|}{|\gamma_k E_{m_k}|}\\
&<\epsilon,
\end{align*}
a contradiction to
\begin{align*}
\frac{\Big|\Big(E_{m_k}'+\frac{(x_k'-x_k)}{\gamma_k}\Big) \Delta E_{m_k}\Big|}{|E_{m_k}|}&=\frac{\Big|E_{m_k}' \Delta \Big(E_{m_k}-\frac{(x_k'-x_k)}{\gamma_k}\Big)\Big|}{|E_{m_k}|} \\
&\ge \inf_{A \in \mathcal{A}_{m_k}}  \frac{|E_{m_k}'\Delta AE_{m_k}|}{|E_{m_k}|}\\
& \ge \epsilon>0,
\end{align*}
thanks to  \eqref{wlje1}. To finish, assume $E_m$ is a minimizer, $m<m_0$, and define $\gamma_m=(\frac{|B_1|}{m})^{\frac{1}{n}}$,
$$|\gamma_m E_{m}|=|B_1|.$$  
Since
$$
P_\alpha(\frac{1}{\gamma_m}B_1) \le P_\alpha(E_m),
$$

\begin{align*}
P_\alpha(\frac{1}{\gamma_m}B_1)+\int_{E_m} g(x)dx &\le P_\alpha(E_m)+\int_{E_m} g(x)dx\\
&\le  P_\alpha(\frac{1}{\gamma_m}B_1)+\int_{\frac{1}{\gamma_m}B_1} g(x)dx,
\end{align*}
thus via subtracting
$$
P_\alpha(\frac{1}{\gamma_m}B_1)+\int_{E_m} g(x)dx,
$$
\begin{align} 
P_\alpha(E_m)-P_\alpha(\frac{1}{\gamma_m}B_1) &\le \int_{\frac{1}{\gamma_m}B_1} g(x)dx-\int_{E_m} g(x)dx \notag\\
&\le (\sup_{B_{R_m}} g) m, \label{rq4}
\end{align}
$$
\frac{1}{\gamma_m}B_1 \subset B_{R_m}.
$$
Next \eqref{rq4} implies
\begin{align}
P_\alpha(\gamma_mE_m)-P_\alpha(B_1)&=(\gamma_m)^{n-\alpha}(P_\alpha(E_m)-P_\alpha(\frac{1}{\gamma_m}B_1))\notag\\
& \le (\gamma_m)^{n-\alpha} (\sup_{B_{R_m}} g) m = |B_1|^{\frac{n-\alpha}{n}} (\sup_{B_{R_m}} g) m^{\frac{\alpha}{n}}.  \label{w1n}
\end{align}
Therefore, since 
$$
\delta(\gamma_m E_m) \ge c(n,\alpha)\Big(\frac{|\gamma_m E_m \Delta (a_m+B_1)|}{|B_1|} \Big)^2, 
$$
thanks to \eqref{w1n} and the sharp quantitative nonlocal isoperimetric inequality one obtains
\begin{align*}
c(n,\alpha)\Big(\frac{|\gamma_m E_m \Delta (a_m+B_1)|}{|B_1|} \Big)^2& \le \delta(\gamma_m E_m)\\
&=\frac{P_\alpha(\gamma_m E_m)-P_\alpha(B_1)}{P_\alpha(B_1)}\\
& \le \frac{|B_1|^{\frac{n-\alpha}{n}}}{P_\alpha(B_1)} (\sup_{B_{R_m}} g) m^{\frac{\alpha}{n}}.
\end{align*}
Now
\begin{equation} \label{vt9}
|\gamma_m E_m \Delta (a_{E_m}+B_1)| \le |B_1| \Big(\frac{1}{c(n,\alpha)} \frac{|B_1|^{\frac{n-\alpha}{n}}}{P_\alpha(B_1)} (\sup_{B_{R_m}} g)\Big)^{\frac{1}{2}} m^{\frac{\alpha}{2n}}.
\end{equation}

Observe this is valid for any minimizer $E_m$, therefore for any minimizer $A E_m$ with $A \in \mathcal{A}_m$: 
\begin{equation}  \label{vt99}
|\gamma_m AE_m \Delta (a_{AE_m}+B_1)| \le |B_1|\Big(\frac{1}{c(n, \alpha)} \frac{|B_1|^{\frac{n-\alpha}{n}}}{P_\alpha(B_1)} (\sup_{B_{R_m}} g)\Big)^{\frac{1}{2}} m^{\frac{\alpha}{2n}}.
\end{equation}
Therefore \eqref{vt9}, \eqref{vt99} yield
\begin{align*}
& |\gamma_m E_m \Delta \Big(\gamma_m AE_m +(a_{E_m}-a_{AE_m})\Big)| \\
&\le |\gamma_m E_m \Delta (a_{E_m}+B_1)|+|(a_{E_m}+B_1)\Delta \Big(\gamma_m AE_m +(a_{E_m}-a_{AE_m})\Big)|\\
&=|\gamma_m E_m \Delta (a_{E_m}+B_1)|+|(a_{AE_m}+B_1)\Delta \Big(\gamma_m AE_m \Big)|\\ 
&\le 2 |B_1|\Big(\frac{1}{c(n,\alpha)} \frac{|B_1|^{\frac{n-\alpha}{n}}}{P_\alpha(B_1)} (\sup_{B_{R_m}} g)\Big)^{\frac{1}{2}} m^{\frac{\alpha}{2n}}.
\end{align*}
Hence
\begin{equation} \label{ra1k}
\frac{|B_1|}{m} |E_m \Delta \Big(AE_m +\frac{(a_{E_m}-a_{AE_m})}{\gamma_m}\Big)| \le 2 |B_1|\Big(\frac{1}{c(n,\alpha)} \frac{|B_1|^{\frac{n-\alpha}{n}}}{P_\alpha(B_1)} (\sup_{B_{R_m}} g)\Big)^{\frac{1}{2}} m^{\frac{\alpha}{2n}}.
\end{equation}
Set
$$
\alpha_m:=\frac{a_{AE_m}-a_{E_m}}{\gamma_m},
$$
thus via \eqref{ra1k}, 
\begin{align} \label{appr}
|AE_m\Delta \Big(E_m + \alpha_m \Big)| &\le 2 \Big(\frac{1}{c(n,\alpha)} \frac{|B_1|^{\frac{n-\alpha}{n}}}{P_\alpha(B_1)} (\sup_{B_{R_m}} g)\Big)^{\frac{1}{2}} m^{1+\frac{\alpha}{2n}} \notag\\
&=2 \Big(\frac{1}{c(n,\alpha)}  \frac{|B_1|^{\frac{n-\alpha}{n}}}{P_\alpha(B_1)} (\sup_{B_{R_m}} g)\Big)^{\frac{1}{2}} m^{1+\frac{\alpha}{2n}}.
\end{align}

\end{proof}

\begin{rem}
The theorem may also be extended to $g \in L_{loc}^1(\{g<\infty\})$ with some assumptions via Lebesgue's differentiation theorem.
\end{rem}

\bibliographystyle{amsalpha}
\bibliography{References}
\end{document}